\newtheorem{thm}{Theorem}[section]
\newtheorem{lemma}[thm]{Lemma}
\newtheorem{proposition}[thm]{Proposition}
\newtheorem{remarks}[thm]{Remark}
\theoremstyle{definition}
\newtheorem{defn}{Definition}[section]
 \theoremstyle{remark}
\newcommand{\ee}{\mathbb{E}}
\newcommand{\rr}{\mathbb{R}}
\newcommand{\pp}{\mathbb{P}}
\newcommand{\e}{\varepsilon}
\newcommand{\GG}{\mathcal{G}}
\def\BB{\mathcal B}
\def\CC{\mathcal C}
\def\DD{\mathcal D}
\def\FF{\mathcal F}
\def\EE{\mathcal E}
\def\II{\mathcal I}
\def\PP{\mathcal P}
\def\BB{\mathcal B}
\def\CC{\mathcal C}
\def\DD{\mathcal D}
\def\FF{\mathcal F}
\def\EE{\mathcal E}
\def\HH{\mathcal H}
\def\PP{\mathcal P}
\def\<{\langle}
\def\>{\rangle}
\def\beq{\begin{equation}}
\def\nneq{\end{equation}}
\def\bdef{\begin{defn}}
\def\ndef{\end{defn}}
\def\bthm{\begin{thm}}
\def\nthm{\end{thm}}
\def\bprop{\begin{prop}}
\def\nprop{\end{prop}}
\def\brmk{\begin{remarks}}
\def\nrmk{\end{remarks}}
\def\bexa{\begin{exa}}
\def\nexa{\end{exa}}
\def\blem{\begin{lem}}
\def\nlem{\end{lem}}
\def\bcor{\begin{cor}}
\def\ncor{\end{cor}}
\def\<{\langle}
\def\>{\rangle}
\date{}
\def\bexe{\begin{exe}}
\def\nexe{\end{exe}}
\def\bprf{\begin{proof}}
\def\nprf{\end{proof}}
\def\bdes{\begin{description}}
\def\ndes{\end{description}}
\title[Moderate deviations for a stochastic wave equation in dimension three]{Moderate deviations for a stochastic wave equation in dimension three}
\author{Lingyan Cheng }
\address{Lingyan Cheng \\
Academy of Mathematics and Systems Sciences,  Chinese Academy of Sciences,  Beijing, 100190, China}
\email{chengly@amss.ac.cn}
\author{Ruinan Li}
\address{Ruinan Li \\
Academy of Mathematics and Systems Sciences,  Chinese Academy of Sciences,  Beijing, 100190, China}
\email{ruinanli@amss.ac.cn}
\author{Ran Wang}
\address{Ran Wang \\School of Mathematics and Statistics, Wuhan University,  Wuhan,  430072, China.}
\email{wangran@ustc.edu.cn}
\author{Nian Yao}
\address{Nian Yao\\College of Mathematics and Statistics, Shenzhen University, Shenzhen, 518060, China.}
\email{yaonian@szu.edu.cn}
\date{}
\begin{document}
\maketitle

 \noindent {\bf Abstract:}
 In this paper, we prove a central limit theorem and establish a moderate deviation principle for a perturbed  stochastic wave equation
 defined on $[0,T]\times \rr^3$. This equation is driven by a Gaussian noise, white in time and correlated in space.  The weak convergence approach  plays an important role.
 \vskip0.3cm

 \noindent{\bf Keyword:} { Stochastic  wave equation;  Large deviations; Moderate deviations; Central limit theorem.
}
 \vskip0.3cm

\noindent {\bf MSC: } { 60H15, 60F05, 60F10.}
\vskip0.3cm

\section{Introduction}

\noindent Since the pioneer work of Freidlin and Wentzell \cite{FW}, the theory of small perturbation large deviations for stochastic dynamics has been extensively developed,  see   books \cite{DPZ,DZ,DE}. The large deviation  principle (LDP for short) for stochastic  reaction-diffusion equations  driven by the space-time white noise was first  obtained by  Freidlin   \cite{Fre} and later by  Sowers \cite{Sowers},  Chenal and Millet \cite{CM}, Cerrai and R\"ockner  \cite{CR} and other authors.
  Also see  \cite{BDM,  RZ, XZ} and references therein for further development.
 \vskip0.3cm

 Like  large deviations, the moderate deviation  problems arise in the theory of statistical  inference quite naturally. The  moderate
 deviation  principle (MDP for short) can provide us with the rate  of convergence and a useful method for constructing asymptotic confidence intervals,
 see \cite{Erm,GZ}  and references therein.
 \vskip0.3cm

Results on the MDP for processes with independent increments were obtained in De Acosta \cite{DeA}, Ledoux \cite{Led} and so on. The study of the MDP estimates for other processes has been carried out as well, e.g., Gao \cite{Gao} for martingales,  Wu \cite{Wu} for Markov processes,  Guillin  and Liptser \cite{GL} for diffusion processes.
\vskip0.3cm
 The problem of moderate deviations for stochastic partial differential equations  has been receiving much attention in very recently years, such as Wang and Zhang \cite{WZ} for stochastic reaction-diffusion equations, Wang {\it et al.} \cite{WZZ} for stochastic Navier-Stokes equations,   Budhiraja {\it et al.} \cite{BDG} and Dong {\it et al.} \cite{DXZZ} for stochastic systems with jumps.
 Those moderate deviation  results are established for the stochastic parabolic equations.   However,  the hyperbolic case is   much more complicated, one difficulty comes from the  more complicated stochastic integral, another one comes from the lack of good regularity properties of  the Green functions. See  \cite{Dalang, DS1}  for the study of the  stochastic wave equation.
 \vskip0.3cm
Using the weak convergence approach   in \cite{BDM}, Ortiz-l\'{o}pez and Sanz-Sol\'{e} \cite{OS} proved a LDP for a stochastic wave equation defined on $[0,T]\times \rr^3$, perturbed  by a Gaussian noise which is  white in time and correlated in space.

\vskip 0.3cm
In this paper, we shall study the  central limit theorem and    moderate deviation  principle  for the stochastic wave equation in dimension $3$.

\vskip 0.3cm

The rest of this paper is organized as follows. In Section 2, we give the framework of the stochastic wave equation, and   state the main results of this paper. In Section 3, we first prove some convergence results   and then give the proof of the central limit theorem. In  Section 4, we prove the moderate deviation principle by using the weak convergence method.

\vskip0.3cm

Throughout the paper, $C(p)$ is a positive constant depending on the  parameter $p$, and $C$ is a positive constant depending on no specific parameter
(except $T$ and the Lipschitz constants), whose values may be different from line to line by convention.

\vskip0.3cm
We end this section with some notions. For any $T>0$ and $D\subset\rr^3$, let $\CC([0,T]\times D)$ be the space of all   continuous functions  from $[0,T]\times D$ to $\rr$, and let $\CC^{\alpha}([0,T]\times D)$ be the space of all H\"{o}lder continuous functions $g$ of degree $\alpha$ jointly in $(t,x)$, with the H\"older norm
$$
\|g\|_{\alpha}:=\sup_{(t,x)\neq (s,y)}\frac{|g(t,x)-g(s,y)|}{\left(|t-s|+|x-y|\right)^\alpha},
$$
and let
$$\CC^{\alpha,0}([0,T]\times D):=\left\{ g\in \CC^{\alpha}([0,T]\times D):  \lim_{\delta\rightarrow0} O_{g}(\delta)=0\right\},$$
where $O_{g}(\delta):=\sup_{|t-s|+|x-y|<\delta}\frac{|g(t,x)-g(s,y)|}{(|t-s|+|x-y|)^{\alpha}}$.  Then  $\CC^{\alpha,0}([0,T]\times D)$ is a Polish space,  which is denoted  by $\EE_\alpha$.

\section{Framework and the main results}

\subsection{Framework}

\vskip0.2cm Let us give the framework taken from Dalang and Sanz-Sol\'{e} \cite{DS1}, Ortiz-L\'{o}pez and Sanz-Sol\'{e} \cite{OS}. Consider the following  stochastic wave equation in spatial dimension  $d=3$:
\begin{equation}\label{SPDE}
    \begin{cases}
     \left(\frac{\partial^2}{\partial t^2}-\Delta\right) u^\e(t,x)=\sqrt\e\sigma\big(u^\e(t,x)\big)\dot{F}(t,x)+b\big(u^\e(t,x)\big),\\
      u^\e(0,x)=\nu_{0}(x),\\
     \frac{\partial}{\partial t}u^\e(0,x)=\tilde{\nu}_{0}(x)
    \end{cases}
\end{equation}
for all $(t,x)\in [0,T]\times\rr^3$ ($T>0$ is a fixed constant),
where $\e>0$, the coefficients $\sigma, b:\rr \rightarrow\rr $ are Lipschitz continuous functions, the term $\Delta u^{\e}$ denotes the Laplacian of $u^{\e}$ in the $x$-variable and the process $\dot{F}$ is the formal derivative of a Gaussian random field, white in time and correlated in space. Precisely, for any
$d\geq 1$, let $\DD(\rr^{d+1})$ be the space of Schwartz test functions. $F=(F(\varphi),\varphi \in \DD(\rr^{d+1}))$  is a  Gaussian process  defined on some probability space with zero mean and covariance functional
\begin{equation}\label{covariance}
E(F(\varphi)F(\psi))=\int_{\rr_+}ds\int_{\rr^d}\Gamma(dx)\left(\varphi(s)\ast\tilde{\psi}(s)\right)(x),
\end{equation}
where $\Gamma$ is a non-negative and non-negative definite tempered
measure on $\rr^d$, $\tilde{\psi}(s)(x):=\psi(s)(-x)$ and the notation $``\ast"$ means the convolution operator.
According to \cite{DM}, the  process $F$ can be extended to a   martingale measure
$$
M=\left\{M_t(A),\ t\ge0,\ A\in\BB_b(\rr^d)\right\},
$$
where $\BB_b(\rr^d)$ denotes the collection of all bounded Borel measurable sets in $\rr^d$.

Using the tempered measure $\Gamma$ above, we can define an inner product on $\DD(\rr^d)$:
$$\langle \varphi,\psi \rangle_\HH:=\int_{\rr^d}\Gamma(dx)(\varphi*\tilde{\psi})(x), \ \   \forall \varphi,\psi \in \DD(\rr^d).
$$
Let $\HH$ be the Hilbert space obtained by the completion of $\DD(\rr^d_{})$ with the inner product $\langle\cdot, \cdot\rangle_\HH $, and denote by $\|\cdot\|_{\HH}$ the induced norm.

By   Walsh's theory of stochastic integration with respect to (w.r.t. for short)  martingale measures, for any $t\geq 0$ and  $h \in \HH$, the stochastic integral
$$
B_t(h):=\int_0^t\int_{\rr^d}h(y)M(ds,dy)
$$
is well defined, and
$$
\left\{B_t^k:=\int_0^t\int_{\rr^d}e_k(y)M(ds,dy);\ k\ge1\right\}
$$
defines a sequence of independent standard Wiener processes, here $\{e_k\}_{k\ge1}$ is a complete orthonormal system of the Hilbert space $\HH$.  Thus, $B_t:=\sum_{k\ge1}B_t^k e_k$ is a cylindrical Wiener process on $\HH$. See  \cite{DPZ} or \cite{Walsh}.

\vskip0.3cm

{\bf Hypothesis (H)}:
\begin{itemize}
        \item[(H.1)] The coefficients $\sigma$ and $b$ are real Lipschitz continuous, i.e.,
       there exists some constant $K>0$ such that
\beq\label{Lip}
|\sigma(x)-\sigma(y)|\le K|x-y|, \quad|b(x)-b(y)|\le K|x-y|, \quad\forall x,y\in\rr.
\nneq

        \item[(H.2)] The spatial covariance measure $\Gamma$ is absolutely continuous with respect to the Lebesgue measure, and the density is
            $f(x)=\varphi(x)|x|^{-\beta},x\in \rr^3 \backslash \{0\}$. Here the function $\varphi$ is bounded and positive,
            $\varphi\in\CC^1(\rr^3), \nabla\varphi\in\CC^\delta_b(\rr^3)$ with  $\delta\in]0,1]$ and $\beta\in]0,2[$.
         \item[(H.3)] The initial values $\nu_0,\tilde{\nu}_0$ are bounded, $\nu_0\in \CC^2(\rr^3),\nabla \nu_0 $ is bounded,  $\triangle\nu_0$ and $\tilde{\nu}_0$ are H\"{o}lder continuous with degrees $\gamma_1,\gamma_2\in]0,1]$, respectively.
\end{itemize}
\vskip0.3cm

According to Dalang and Sanz-Sol\'e \cite{DS1}, under hypothesis ({\bf H}), Eq.\eqref{SPDE} admits a unique solution $u^\e$:
\begin{align}\label{SPDE solution}
u^{\e}(t,x)=&w(t,x)+\sqrt{\e} \sum_{k\ge 1}\int_0^t\left\langle G(t-s,x-\cdot)\sigma(u^{\e}(s,\cdot)) , e_k(\cdot)\right\rangle_{\HH}
dB_s^k\notag\\
           &+\int_0^t\big[G(t-s)\ast b(u^{\e}(s,\cdot)) \big](x)ds,
\end{align}
 where
$$
w(t,x):=\left(\frac{d}{dt}G(t)*\nu_0\right)(x)+\left(G(t)*\tilde{\nu}_0\right)(x),
$$
and $G(t)=\frac{1}{4\pi t}\sigma_t $,   $\sigma_t$ is the uniform surface measure (with total mass $4\pi t^2$) on the sphere of radius
$t$.
 Furthermore, for any $p\in[2,\infty[$,
\beq\label{eq u e estimate}
\sup_{\e\in]0,1]}\sup_{(t,x)\in[0,T]\times\rr^3}\ee\left[|u^{\e}(t,x)|^p\right]<+\infty,
\nneq
 and  for any
\beq\alpha \in
\II:= \left]0,\gamma_1\wedge\gamma_2\wedge\frac{2-\beta}{2}\wedge\frac{1+\delta}{2}\right[,
\nneq
 there exists $C>0$ such that for any $(t,x), (s,y)\in[0,T]\times D$, it holds that
$$
E\left[|u^\varepsilon(t,x)-u^\varepsilon(s,y)|^p\right]\leq C(|t-s|+|x-y|)^{\alpha p}.
 $$
  Consequently, almost all  the sample paths of the  process $\left\{u^\varepsilon(t,x);(t,x)\in[0,T]\times D\right\}$
    are $\alpha$-H\"{o}lder continuous jointly in $(t,x)$.  See Dalang and Sanz-Sol\'e \cite{DS1}  or  Hu {\it et al}. \cite{HHN} for details.

\vskip0.3cm
Intuitively, as the parameter $\e$ tends to zero, the solution $u^\e$ of $(\ref{SPDE solution})$ will tend to the solution of the deterministic  equation
\beq\label{eq u0}
u^0(t,x)=w(t,x)+\int^{t}_{0}[G(t-s)\ast b(u^0(s,\cdot))](x)ds.
\nneq
\vskip0.3cm
In this paper, we shall investigate deviations of $u^\e$ from  $u^0$, as $\e$ decreases to $0$. That is, the
asymptotic behavior of the trajectories,
\beq\label{U}
Z^\e(t,x):=\frac{1}{\sqrt{\e}h(\e)}(u^\e-u^0)(t,x),\quad(t,x)\in [0,T]\times D.
\nneq
\begin{itemize}
  \item[(LDP)]
 The case $h(\e)=1/\sqrt\e$ provides some large deviation  estimates.   Ortiz-L\'{o}pez and
 Sanz-Sol\'{e}  \cite{OS} proved that the law of the solution $u^{\e}$  satisfies a LDP, see Theorem \ref{LDP} below.
  \item[(CLT)]
  If $h(\e)$ is identically equal to $1$, we are in the domain of the central limit theorem (CLT for short).
We will show that $(u^\e-u^0)/\sqrt\e$ converges  as $\e\rightarrow 0^+$ to a  random field,   see Theorem \ref{CLT} below.
  \item[(MDP)]
  To fill in the gap between the central limit theorem scale and the large deviations scale,
we will study moderate deviations, that is when the deviation scale satisfies
\begin{equation} \label{h}
 h(\e)\to+\infty \ \ \text{and }\quad\sqrt\e h(\e)\to0,\quad \text{as}\quad\e\to0.
 \end{equation}
 In this case, we will prove that $Z^{\e}$ satisfies a LDP,  see Theorem \ref{MDP} below.  This special type of LDP is called the MDP for $u^\e$, see \cite[Section 3.7]{DZ}.

\end{itemize}

Throughout this paper, we assume \eqref{h} is in place.

\subsection{Main results}


Let $\HH_T:=L^2([0,T]; \HH)$ and consider the usual $L^2$-norm  $\|\cdot\|_{\HH_T}$ on this space. For any $h\in\HH_T$, we consider the deterministic evolution equation:
\begin{align}\label{skeleton}
V^h(t,x)&=w(t,x)+\int_0^t\left\langle G(t-s,x-\cdot)\sigma(V^h(s,\cdot)),h(s,\cdot)\right\rangle_{\HH} ds\notag\\
        &\quad +\int_0^t\left[G(t-s)\ast b(V^h(s,\cdot))\right](x)ds.
\end{align}

 By \cite[Theorem 2.3]{OS},  Eq.\eqref{skeleton} admits a unique solution   $V^h=:\mathcal{G}_1(h)\in \EE_\alpha$, where $\mathcal G_1$ is the solution functional   from $\HH_T$ to $\EE_{\alpha}$.   For any $f\in \EE_\alpha$, define
\begin{equation}\label{LDP rate function}
I_1(f)=\inf_{h\in \HH_T:\mathcal{G}_1(h)=f}\left\{\frac{1}{2}\|h\|^2_{\HH_T}\right\},
\end{equation}
with the convention $\inf\emptyset=+\infty$.

Ortiz-L\'{o}pez and Sanz-Sol\'{e} \cite{OS} proved the following LDP result for $u^\e$.
\bthm[Ortiz-L\'{o}pez and Sanz-Sol\'{e} \cite{OS}]\label{LDP}{\rm  Under assumption  {\bf(H)},   the family $\{u^\e;\ \e\in ]0,1]\}$ given by (\ref{SPDE solution}) satisfies a
large deviation  principle on $\EE_\alpha$ with the speed function $\e^{-1}$ and with the good rate function $I_1$ given by (\ref{LDP rate function}). More precisely,
\begin{itemize}
  \item[(a)]  for any $L>0$, the set $\{f\in \EE_{\alpha}; I_1(f)\le L\}$ is compact in $\EE_{\alpha}$;
  \item[(b)]for any closed subset $F\subset\EE_\alpha$,
$$ \limsup_{\e\rightarrow0^+}\e\log \pp(u^\e\in F)\leq-\inf_{f\in F}I_1(f);$$
  \item[(c)]
 for  any open subset $G\subset \EE_\alpha$,
$$\liminf_{\e\rightarrow0^+}\e\log \pp(u^\e\in G)\geq-\inf_{f\in G}I_1(f). $$
\end{itemize}
}
\nthm

\vskip0.3cm
In this paper, we further assume the condition ({\bf D}):
\begin{center}
    the function $b$ is differentiable and its derivative $b'$ is also Lipschitz.
\end{center}
More precisely, there exists a positive constant $K'$ such that
\begin{equation}\label{H3}
|b'(y)-b'(z)|\le K'|y-z|,\quad \text{for all } y,z\in\rr.\ \
\end{equation}
Combined with the Lipschitz continuity of $b$, we  conclude that
\begin{equation}\label{H3'}
|b'(z)|\le K,\quad \text{for all }  z\in\rr.
\end{equation}
\vskip0.3cm
Our first main result is the following central limit theorem.
\bthm \label{CLT}{\rm
Under conditions {\bf(H)} and {\bf (D)}, for any $\alpha\in \II$ and $p\ge 2$, the random field $(u^\e-u^0)/\sqrt{\e}$ converges in $L^p$ to a random
field $Y$ on $\EE_\alpha$, determined by
\begin{equation}\label{eq Y}
    \begin{cases}
     \left(\frac{\partial^2}{\partial t^2}-\Delta\right) Y(t,x)=\sigma(u^0(t,x))\dot{F}(t,x)+b'(u^0(t,x))Y(t,x), \\
      Y(0,x)=0,\\
     \frac{\partial}{\partial t}Y(0,x)=0,\quad t \in [0, T], \ \  x \in \rr^3.
     \end{cases}
\end{equation}
  }
\nthm
\vskip0.3cm

For any $\e>0$, let $q_{\e}:=Y/h(\e)$. Then $q_{\e}$ satisfies the following equation
\begin{equation}\label{eq qe}
          \left(\frac{\partial^2}{\partial t^2}-\Delta\right) q_{\e}(t,x)=\frac{1}{h(\e)}\sigma(u^0(t,x))\dot{F}(t,x)+b'(u^0(t,x))q_{\e}(t,x),
\end{equation}
with the same initial conditions as those of $Y$.

 Notice that   Eq.\eqref{eq qe} is a particular case of Eq.\eqref{SPDE}    if  its coefficients $\sigma$ and $b$ are allowed to depend on $(t,x)$.
Now, assume that the coefficients $\sigma$ and $b$ in Eq.\eqref{SPDE}   depend on $(t,x)$ and  they are Lipschitz continuous in the third variable uniformly over $(t,x)\in[0,T]\times\mathbb R^3$, that is, $\sigma, b:[0,T]\times\mathbb R^3\times \mathbb R\rightarrow \mathbb R$ satisfy that for all $u, v\in \mathbb R$,
$$
\sup_{t\in[0,T], x\in\mathbb R^3}\left(|\sigma(t,x,u)-\sigma(t,x,v)|+|b(t,x,u)-b(t,x,v)| \right)\le K|u-v|.
$$
By using the same strategies in   \cite{DS1} and \cite{OS},  we know that  the wave equation under above assumption admits a unique solution and the LDP result in  Theorem \ref{LDP} also holds.
 In fact, their proofs in this generalized case are the  same as that in  \cite{DS1} and \cite{OS},  only  the notions are need to be changed.   For example, see   \cite{LXZ, XZ}     for other type SPDEs.

  Hence,  $q_{\e}$ obeys a
LDP on $\EE_\alpha$ with the speed $h^2(\e)$ and with the good rate function
\begin{equation}\label{rate function1}
 I(g)=\left\{
       \begin{array}{ll}
         \inf \{\frac12\|h\|_{\HH_T}^2; Z^h=g\};   & \hbox{\text{if} $g\in \textit{Im}(Z^{\cdot})$;}\\
        +\infty, & \hbox{\text{otherwise},}
       \end{array}
     \right.
\end{equation}
where $Z^{h}$ is the solution of the following deterministic evolution equation
\begin{align}\label{eq sk}
Z^{h}(t,x)=&\int_0^t\langle G(t-s,x-\cdot)\sigma(u^0(s,\cdot)), h(s,\cdot)\rangle_{\HH} ds\notag\\
            &+\int_0^tG(t-s)\ast [b'(u^0(s,\cdot))Z^{h}(s,\cdot)](x)ds.
\end{align}

\vskip0.3cm

Our second main result is that $\{(u^\e-u^0)/[\sqrt{\e}h(\e)]\}$ satisfies the same LDP with $q_{\e}$, that is  the following theorem.
\bthm\label{MDP}{\rm Under conditions {\bf(H)} and {\bf (D)},  the family $\{(u^\e-u^0)/[\sqrt{\e}h(\e)];\ \e\in ]0,1]\}$ satisfies a
large deviation  principle on $\EE_\alpha$ with the speed function $h^2(\e) $ and with the good rate function $I$ given by \eqref{rate function1}.

 }
\nthm

 \section{Proof of Theorem \ref{CLT}}
\subsection{Convergence of solutions}

For any function $\phi:[0,T]\times\rr^3\rightarrow\rr$, let
$$|\phi|_{t,\infty} :=\sup\left\{|\phi(s,x)|:(s,x)\in[0,t]\times \rr^3\right\}.$$

The next result is concerned with the convergence of $u^\e$ as $\e\to 0$.
\begin{proposition}\label{Prop 2}{\rm
Under ${\bf(H)}$,   for any $p\ge 2$, there exists some positive constant $C(p, K, T)$ depending on $p, K, T$ such that
\begin{equation}\label{eq prop 2}
\ee\left[|u^\e-u^0|_{T, \infty}^p\right]\leq \e^{\frac{p}{2}}C(p,K,T) \rightarrow0, \quad \text{as} \ \e\rightarrow 0.
\end{equation}
}
\end{proposition}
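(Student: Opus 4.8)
The plan is to subtract the two mild formulations, bound the resulting stochastic convolution by a constant, turn the resulting drift term into the feedback of a Gronwall inequality, and conclude by Gronwall's lemma. Put $v^\e:=u^\e-u^0$. Subtracting \eqref{eq u0} from \eqref{SPDE solution} gives
\[
v^\e(t,x)=\sqrt\e\,N^\e(t,x)+\int_0^t\big[G(t-s)\ast\big(b(u^\e(s,\cdot))-b(u^0(s,\cdot))\big)\big](x)\,ds,
\]
where $N^\e(t,x):=\sum_{k\ge 1}\int_0^t\big\langle G(t-s,x-\cdot)\sigma(u^\e(s,\cdot)),e_k(\cdot)\big\rangle_\HH\,dB_s^k$. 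Taking $|\cdot|_{t,\infty}$, raising to the power $p\ge 2$ and then taking expectations, one obtains
\[
\ee\big[|v^\e|_{t,\infty}^p\big]\le 2^{p-1}\e^{p/2}\,\ee\big[|N^\e|_{T,\infty}^p\big]+2^{p-1}\,\ee\Big[\,\sup_{r\le t,\;x}\Big|\int_0^r\big[G(r-s)\ast\big(b(u^\e(s,\cdot))-b(u^0(s,\cdot))\big)\big](x)\,ds\Big|^p\Big],
\]
and I would treat the two terms on the right separately.

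For the drift term, the Lipschitz bound \eqref{Lip} gives $|b(u^\e(s,y))-b(u^0(s,y))|\le K|v^\e(s,y)|\le K|v^\e|_{s,\infty}$, while the nonnegative measure $G(r)$ has total mass $r\le T$; hence the inner integral is bounded in modulus by $K\int_0^r(r-s)|v^\e|_{s,\infty}\,ds\le KT\int_0^t|v^\e|_{s,\infty}\,ds$, and by Jensen's inequality the second term above is at most $(KT)^pT^{p-1}\int_0^t\ee[|v^\e|_{s,\infty}^p]\,ds$. Note that $v^\e$ appears nowhere else on the right-hand side, since the stochastic integral carries $\sigma(u^\e)$ rather than $\sigma(u^\e)-\sigma(u^0)$; this is exactly what makes the Gronwall scheme close.

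The core step is to show $\ee\big[|N^\e|_{T,\infty}^p\big]\le C(p,K,T)$ uniformly in $\e\in\,]0,1]$. Pointwise in $(t,x)$, the Burkholder--Davis--Gundy inequality for Walsh's stochastic integral bounds $\ee[|N^\e(t,x)|^p]$ by $C_p\,\ee\big[\big(\int_0^t\|G(t-s,x-\cdot)\sigma(u^\e(s,\cdot))\|_\HH^2\,ds\big)^{p/2}\big]$. Writing the $\HH$-norm as $\|G(r,x-\cdot)\phi\|_\HH^2=\int_{\rr^3}\!\int_{\rr^3}G(r,x-dy)\,\phi(y)\,f(y-z)\,G(r,x-dz)\,\phi(z)$ and using $|\phi(y)\phi(z)|\le\frac12(\phi(y)^2+\phi(z)^2)$ together with $f\ge 0$, $G\ge 0$ and the symmetry in $y\leftrightarrow z$, this bracket is at most $\int_{\rr^3}\!\int_{\rr^3}G(r,x-dy)\,\phi(y)^2\,f(y-z)\,G(r,x-dz)$. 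Under {\bf(H.2)} with $\beta\in\,]0,2[$, the kernel mass $J(r):=\int_{\rr^3}\!\int_{\rr^3}G(r,x-dy)f(y-z)G(r,x-dz)$ is independent of $x$ and satisfies $\sup_{r\le T}J(r)<\infty$, hence $\int_0^T J(r)\,dr<\infty$; applying Jensen's inequality against the finite measure $G(t-s,x-dy)\,f(y-z)\,G(t-s,x-dz)\,ds$, then Fubini, and then the a priori bound \eqref{eq u e estimate} (which, via $|\sigma(u)|\le|\sigma(0)|+K|u|$, makes $\sup_{\e}\sup_{(s,y)}\ee[|\sigma(u^\e(s,y))|^p]$ finite), one gets $\sup_{(t,x)}\ee[|N^\e(t,x)|^p]\le C(p,K,T)$. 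To upgrade this pointwise bound to a bound on $\ee[|N^\e|_{T,\infty}^p]$, I would prove, by the same device combined with the space and time increment estimates for $G$ used in \cite{DS1} (whose admissible H\"older exponents are precisely those of $\II$), a modulus estimate $\ee[|N^\e(t,x)-N^\e(s,y)|^p]\le C(|t-s|+|x-y|)^{\alpha p}$ with $C$ independent of $\e$, and then invoke Kolmogorov's continuity criterion with $p$ large, the supremum in the spatial variable being controlled by the spatial homogeneity of the noise.

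Combining the two estimates yields $\ee[|v^\e|_{t,\infty}^p]\le \e^{p/2}C(p,K,T)+C(p,K,T)\int_0^t\ee[|v^\e|_{s,\infty}^p]\,ds$ for every $t\in[0,T]$, and \eqref{eq prop 2} then follows from Gronwall's lemma. I expect the main obstacle to be the third step, i.e.\ the $\e$-uniform control of $\ee[|N^\e|_{T,\infty}^p]$: this is where hypothesis {\bf(H.2)} and the rather weak regularity of the wave Green function $G$ are genuinely needed, and it relies on the a priori moment bounds for $u^\e$ together with a careful reworking, uniform in $\e$, of the H\"older-type estimates of \cite{DS1}. The remaining steps are routine.
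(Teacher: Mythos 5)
Your proposal follows essentially the same route as the paper: the same decomposition of $u^\e-u^0$ into the drift difference plus $\sqrt\e$ times the stochastic convolution, the Lipschitz property of $b$ together with the total mass $\int_{\rr^3}G(r,dy)=r\le T$ and Jensen/H\"older to produce the Gronwall feedback, Burkholder plus the finiteness of $r\mapsto\int_{\rr^3}|\FF G(r)(\xi)|^2\mu(d\xi)$ (your $J(r)$, written in spectral form in the paper) and the a priori bound \eqref{eq u e estimate} for the martingale part, and Gronwall to conclude. The only divergence is that the paper passes directly from the $(t,x)$-uniform pointwise moment estimate to the bound on $\ee\left[|T_2^\e|_{T,\infty}^p\right]$ without the uniform-increment/Kolmogorov step you sketch, so your handling of the supremum is, if anything, more detailed than (and consistent with) the paper's own argument.
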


\begin{proof} Since for any $0\le t\le T$,
\begin{align*}
 u^\e(t,x)-u^0(t,x)&=\int_0^t\left[G(t-s)\ast\left(b(u^\e(s,\cdot))-b(u^0(s,\cdot))\right)\right](x)ds\notag\\
                   &\quad +\sqrt\e\sum_{k\geq 1}\int_0^t\langle G(t-s, x-\cdot)\sigma (u^\e(s,\cdot)), e_k(\cdot)\rangle _\HH dB^k_s\notag\\
                   &=:T^\e_1(t,x)+T^\e_2(t,x),
\end{align*}
we obtain that for any $p\ge 2$,
\begin{equation}\label{T}
|u^\e-u^0|_{t,\infty}^p\leq 2^{p-1}\left(|T^\e_1|_{t,\infty}^p +|T^\e_2|_{t,\infty}^p\right).
\end{equation}
By the H\"{o}lder's inequality w.r.t the measure on $[0,T]\times \rr^3$ given by $G(t-s,dy)ds$ and the Lipschitz continuity of $b$, we obtain that
\begin{align}\label{T1}
&\ee\left[|T_1^\e|_{t,\infty}^p\right]\notag\\
 \leq & C(K) \left(\int_0^t\int_{\rr^3}G(t-s,dy)ds\right)^{p-1}\times
\int_0^t\ee\left[|u^\e-u^0|_{s,\infty}^p\right]\left(\int_{\rr^3}G(t-s,dy)\right)ds \notag\\
                     \leq & C(p, K,T)\int_0^t\ee\left[|u^\e-u^0|_{s,\infty}^p\right]ds.
\end{align}

By  Burkholder's inequality, H\"older's inequality,  the Lipschitz continuity of $\sigma$ and \eqref{eq u e estimate}, we have
\begin{align*}
& \ee\left[\left|\sum_{k\geq 1}\int_0^t\langle G(t-s,x-\cdot)\sigma(u^\e(s,\cdot)), e_k(\cdot)\rangle_\HH dB^k_s\right|^p\right]\notag \\
\leq& C(p)\ee\left[\left|\int_0^t\|G(t-s,x-\cdot)\sigma(u^\e(s,\cdot))\|^2_\HH ds\right|^{\frac{p}{2}}\right] \notag\\
\leq& C(p,K)\left(\int_0^t\int_{\rr^3}|\FF G(t-s)(\xi)|^2\mu(d\xi)ds\right)^{\frac{p}{2}-1}\notag\\
  & \times \int_0^t\left[\left( 1+ \sup_{(r,z)\in[0,s]\times\rr^3}\ee\left[|u^\e(r,z)|^p\right]\right)\int_{\rr^3}|\FF
  G(t-s)(\xi)|^2\mu(d\xi)\right] ds \notag\\
\leq& C(p,K,T)\int_0^t\left(1+\sup_{(r,z)\in[0,s]\times\rr^3}\ee[\left|u^\e(r,z)|^p\right]\right)ds <\infty,
\end{align*}
where \eqref{eq u e estimate} is used in the last inequality. The above estimate yields that
\begin{equation}\label{T2}
\ee[|T^\e_2|_{T,\infty}^p]\leq\e^{\frac{p}{2}}C(p,K,T).
\end{equation}
Putting (\ref{T})-(\ref{T2}) together, and using the Gronwall's inequality, we  obtain the desired  inequality \eqref{eq prop 2}.

The proof is complete.
\end{proof}

\vskip0.3cm

\subsection{The proof of CLT}
\noindent

The following lemma is a consequence of the Garsia-Rodemich-Rumsey's theorem, see  Millet and Sanz-Sol\'e  \cite[Lemma A2]{MS}.
\begin{lemma}\label{Lem 3}
\rm{ Let $\{V^\e(t,x);\ (t,x)\in[0,T]\times D\}$ be a family of real-valued stochastic processes. Assume that  there exists $p\in]1,\infty[$ such that
\begin{itemize}
  \item[(A1).] for any $(t,x)\in[0,T]\times D$,
     $$\lim_{\e\rightarrow 0}\ee\left[|V^{\e}(t,x)|^p\right]=0;$$
  \item[(A2).] there exists $\gamma_0>0$ such that for any $(t,x),(s,y)\in[0,T]\times D, $
$$
\ee\left[|V^\e(t,x)-V^\e(s,y)|^p\right]\leq C(|t-s|+|x-y|)^{\gamma_0+4},
$$
where $C$ is a positive constant independent of $\e$.
\end{itemize}
Then for any $\alpha\in]0,\gamma_0/p[$, $r\in[1,p[,$ we have
$$\lim_{\e\to0}\ee\left[\| V^\e\|_\alpha^r\right]=0. $$
}
\end{lemma}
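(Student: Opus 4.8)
The plan is to prove Lemma \ref{Lem 3} as a direct application of the Garsia--Rodemich--Rumsey inequality (in the form given in Millet and Sanz-Sol\'e \cite[Lemma A2]{MS}), combined with a dominated-convergence argument that exploits the uniform moment bound in (A2) together with the pointwise convergence in (A1). The key point is that (A2) provides, for a fixed exponent $p$, a quantitative modulus of continuity for $V^\e$ that is \emph{uniform} in $\e$, while (A1) gives pointwise smallness; interpolating these two pieces of information against the GRR machinery yields convergence of the full H\"older norm in a slightly weaker topology (exponent $r<p$, degree $\alpha<\gamma_0/p$).

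The steps, in order, are as follows. First I would recall the GRR bound: for a continuous function $g$ on $[0,T]\times D\subset\rr\times\rr^3$ (a domain of total ``dimension'' $4$), with $\Psi(u)=u^p$ and $p(u)=u^{(\gamma_0+4)/p}$, one has the pointwise estimate
$$
|g(t,x)-g(s,y)|^p\le C_{p,\gamma_0}\,(|t-s|+|x-y|)^{\gamma_0}\int_{([0,T]\times D)^2}\frac{|g(t',x')-g(s',y')|^p}{(|t'-s'|+|x'-y'|)^{\gamma_0+8}}\,dt'dx'ds'dy',
$$
so that, taking the supremum over $(t,x)\ne(s,y)$ and raising to a suitable power, $\|g\|_\alpha^r$ is controlled (via H\"older's inequality in the double integral, using $r<p$) by an integral of $|g(t',x')-g(s',y')|^p$ against an integrable kernel, provided $\alpha<\gamma_0/p$. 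Applying this to $g=V^\e(\omega,\cdot)$ and taking expectations, I would obtain
$$
\ee\big[\|V^\e\|_\alpha^r\big]\le C\int_{([0,T]\times D)^2}\frac{\ee\big[|V^\e(t,x)-V^\e(s,y)|^p\big]^{r/p}}{(|t-s|+|x-y|)^{(\gamma_0+8)r/p}}\,dtdxdsdy,
$$
after an application of Fubini and Jensen/H\"older to move the expectation inside. Second, I would split the integrand's numerator: by (A2) the numerator is bounded by $C(|t-s|+|x-y|)^{(\gamma_0+4)r/p}$, which against the denominator $(|t-s|+|x-y|)^{(\gamma_0+8)r/p}$ leaves a negative power whose exponent, $-4r/p$, is $>-4$ exactly when $r<p$ — hence the kernel is integrable over the $8$-dimensional product domain (which has finite measure since $D$ is bounded). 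This gives a uniform-in-$\e$ integrable dominating function. Third, I would check that the integrand converges to $0$ pointwise (for a.e.\ $(t,x,s,y)$): by (A1), $\ee[|V^\e(t,x)|^p]\to0$ and $\ee[|V^\e(s,y)|^p]\to0$, so $\ee[|V^\e(t,x)-V^\e(s,y)|^p]\to0$ by the elementary inequality $|a-b|^p\le 2^{p-1}(|a|^p+|b|^p)$, and therefore the numerator (raised to the $r/p$) tends to $0$. Fourth, the dominated convergence theorem applied to the displayed integral gives $\lim_{\e\to0}\ee[\|V^\e\|_\alpha^r]=0$, which is the claim.

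The main obstacle is purely bookkeeping on the GRR side: one must choose the auxiliary functions $\Psi$ and $p(\cdot)$ in the GRR inequality so that the exponent of the metric that survives after the estimate is $\gamma_0$ (to produce the H\"older degree $\alpha<\gamma_0/p$ after dividing by $p$), and simultaneously so that the resulting kernel is integrable over the product of two copies of $[0,T]\times D$ — a set of Hausdorff ``dimension'' $2\times 4=8$, which is the source of the shift from $\gamma_0+4$ in (A2) to $\gamma_0+8$ in the kernel. Since this exact computation is carried out in \cite[Lemma A2]{MS}, I would simply invoke their statement rather than reprove it; the only genuine work in this lemma is combining that bound with (A1)--(A2) via dominated convergence, and verifying the strict inequalities $r<p$ and $\alpha<\gamma_0/p$ are precisely what make the dominating kernel integrable. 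No step is deep; the proof is a short paragraph once the GRR estimate is quoted.
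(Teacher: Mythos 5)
Your overall plan (a pathwise Garsia--Rodemich--Rumsey estimate combined with dominated convergence, using (A2) for domination and (A1) for pointwise smallness) is the right one --- note that the paper itself gives no proof, quoting the statement as Lemma A2 of \cite{MS}, which is this very lemma and not merely the GRR computation --- but your quantitative execution has a genuine gap. The step where you move the expectation inside the double integral, namely
$\ee[\|V^\e\|_\alpha^r]\le C\int_{([0,T]\times D)^2}\big(\ee[|V^\e(t,x)-V^\e(s,y)|^p]\big)^{r/p}(|t-s|+|x-y|)^{-(\gamma_0+8)r/p}\,dt\,dx\,ds\,dy$,
is not justified. Setting $B_\e:=\int\int |V^\e(z)-V^\e(z')|^p\,|z-z'|^{-(\gamma_0+8)}\,dz\,dz'$, the GRR bound gives $\|V^\e\|_\alpha^r\le C B_\e^{r/p}$, and Jensen (or Minkowski) only yields $\ee[B_\e^{r/p}]\le(\ee B_\e)^{r/p}$, with the power $r/p$ \emph{outside} the $(z,z')$-integral; the inequality $\ee[(\int F)^{\theta}]\le\int(\ee F)^{\theta}$ with $\theta=r/p<1$ is false in general (test a deterministic $F$ concentrated on a set of small measure). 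This matters, because with the legitimate bound your choice of kernel exponent $\gamma_0+8$ is exactly borderline: (A2) dominates the integrand of $\ee B_\e$ only by $C|z-z'|^{-4}$, which is \emph{not} integrable over $([0,T]\times D)^2\subset\rr^8$ (logarithmic divergence at the diagonal), so dominated convergence cannot be applied. Thus your remark that $r<p$ is what makes the kernel integrable rests entirely on the unjustified interchange.

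The repair is easy and is precisely what the strict inequality $\alpha<\gamma_0/p$ is for: choose $\gamma'$ with $\alpha p<\gamma'<\gamma_0$ and apply GRR with the kernel $|z-z'|^{-(\gamma'+8)}$, obtaining $\|V^\e\|_\alpha^p\le C\,B'_\e$ (the factor $\sup|z-z'|^{\gamma'-\alpha p}$ being bounded since $[0,T]\times D$ is bounded), where $B'_\e:=\int\int |V^\e(z)-V^\e(z')|^p\,|z-z'|^{-(\gamma'+8)}\,dz\,dz'$. Then $\ee[\|V^\e\|_\alpha^r]\le C(\ee B'_\e)^{r/p}$ by Jensen (using $r<p$), and by Fubini the integrand of $\ee B'_\e$ is dominated, via (A2), by $C|z-z'|^{\gamma_0-\gamma'-4}$, which is integrable near the four-dimensional diagonal because $\gamma_0-\gamma'>0$, while it tends to $0$ pointwise off the diagonal by (A1) and $|a-b|^p\le 2^{p-1}(|a|^p+|b|^p)$; dominated convergence gives $\ee B'_\e\to0$ and hence the conclusion. (Two minor points: the GRR lemma is applied pathwise, so continuity of the sample paths should be assumed or arranged, and your auxiliary function should be $p(u)=u^{(\gamma'+8)/p}$ rather than $u^{(\gamma_0+4)/p}$ to produce the kernel you display.)
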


\begin{proof}[Proof of Theorem \ref{CLT}] Denote $Y^\e:=(u^\e-u^0)/\sqrt{\e}$.  We will prove that for any $\alpha\in \II,\ p\geq 2$,
\begin{equation}\label{CLT p}
\lim_{\e\to0}\ee\left[\|Y^\e-Y\|^p_\alpha\right]=0.
\end{equation}
To this end, we only need to verify (A1) and (A2) in Lemma \ref{Lem 3} for $V^\e:=Y^\e-Y$.
Notice that
\begin{align}\label{I}
 &Y^\e(t,x)-Y(t,x)\notag\\
 =&\sum_{k\geq1}\int_0^t\left\langle
 G(t-s,x-\cdot)\left(\sigma(u^\e(s,\cdot))-\sigma(u^0(s,\cdot))\right),e_k(\cdot)\right\rangle_{\HH}dB^k_s\notag\\
& +\int_0^tG(t-s)\ast\left(\frac{b(u^\e(s,\cdot))-b(u^0(s,\cdot))}{\sqrt \e}-b'(u^0(s,\cdot))Y(s,\cdot)\right)(x)ds\notag\\
=:&I_1^\e(t,x)+I_2^\e(t,x)+I_3^\e(t,x),
   \end{align}
where
\begin{align*}
I_1^\e(t,x)&:=\sum_{k\geq1}\int_0^t\left\langle
G(t-s,x-\cdot)\left(\sigma(u^\e(s,\cdot))-\sigma(u^0(s,\cdot))\right),e_k(\cdot)\right\rangle_{\HH}dB^k_s,\\
I_2^\e(t,x)&:=\int_0^tG(t-s)\ast\left(\frac{b(u^\e(s,\cdot))-b(u^0(s,\cdot))}{\sqrt\e}-b'(u^0(s,\cdot))Y^\e(s,\cdot)\right)(x)ds,\\
I_3^\e(t,x)&:=\int_0^t G(t-s)\ast \left[b'(u^0(s,\cdot)\big(Y^\e(s,\cdot)-Y(s,\cdot)\big)\right](x)ds.
   \end{align*}

Next, we shall verify (A1) and (A2) for $I_i^\e, i=1,2,3$.

\noindent{\bf Step 1.}
  Following the similar calculation  in the proof of (\ref{T2}) and using the Lipschitz continuity of $\sigma$,  we  can deduce that for any $p\ge2$,
\begin{align}\label{I1}
 \ee\left[|I_1^\e|^p_{T,\infty} \right]\le C(p,K,T)\ee\left[|u^\e-u^0|_{T,\infty}^p\right] \le \e^{\frac p 2}C(p,K,T),
\end{align}
where we have used Proposition \ref{Prop 2} in the last inequality.

Notice that $u^{\e}=u^0+\sqrt\e Y^{\e}$. By the mean theorem for derivatives,  there exists a random field $v^\e(t,x)$ taking values in $(0,1)$ such that
\begin{align*}
\frac{1}{\sqrt\e}\left[b(u^\e)-b(u^0)\right]=&b'\big(u^0+\sqrt\e v^\e Y^{\e} \big)Y^\e.
\end{align*}
By the Lipschitz continuity of  $b'$, we have
\begin{align}\label{eq b1}
\frac{1}{\sqrt\e}\left[b(u^\e)-b(u^0)\right]-b'(u^0)Y^{\e}
=\left[b'\big(u^0+\sqrt\e v^\e Y^{\e} \big)-b'(u^0)\right] Y^{\e}
 \le\sqrt{\e} K' |Y^{\e}|^2.
  \end{align}
Hence
\begin{align*}
|I_2^\e(t,x)|&\le \sqrt \e K'\int_0^tG(t-s)\ast|Y^\e(s,\cdot)|^2(x)ds.
\end{align*}
By H\"{o}lder's inequality and Proposition \ref{Prop 2}, we obtain that for any $p\ge 2$,
\begin{align}\label{I2}
&\ee\left[|I_2^\e|_{t,\infty}^p\right]\notag\\
\leq&\e^{\frac p2}K'^p \left(\int_0^t\int_{\rr^3}G(t-s,dy)ds\right)^{p-1}
                          \times\int_0^t\ee\left[|Y^\e|_{s,\infty}^{2p}\right]\left(\int_{\rr^3}G(t-s,dy)\right)ds\notag\\
\leq & \e^{\frac p 2}C(p, K, K', T).
\end{align}
 By H\"{o}lder's inequality and \eqref{H3'}, we deduce that for any $p\ge 2$,
\begin{align}  \label{I3}
&\ee\left[|I_3^\e|^p_{t,\infty} \right]\notag\\
\le& {K}^p
\left(\int_0^t\int_{\rr^3}G(t-s,dy)\right)^{p-1}\times\int_0^t\ee\left[|Y^\e-Y|_{s,\infty}^{p}\right]\left(\int_{\rr^3}G(t-s,dy)\right)ds\notag\\
                             \le & C(p, K, T )\int_0^t \ee\left[|Y^\e-Y|_{s,\infty}^{p} \right] ds.
 \end{align}
Putting \eqref{I}, \eqref{I1}, \eqref{I2} and \eqref{I3} together, we have
$$
\ee\left[|Y^\e-Y|_{t,\infty} \right]^p\le C(p,K,K',T)\left(\e^{\frac{p}{2}}+\int_0^t \ee\left[|Y^\e-Y|_{s,\infty}^{p} \right] ds\right).
$$
By Gronwall's inequality, we have
\begin{equation}\label{bound V}
\ee\left[|Y^\e-Y|_{T,\infty}^p\right]\le \e^{\frac{p}{2}}C(p,K,K', T)\rightarrow 0,\quad \text{as}\ \e\rightarrow 0,
\end{equation}
which, in particular, implies  (A1) in Lemma \ref{Lem 3}.\\

\noindent{\bf Step 2.}  Notice that  $Y^\e$ satisfies that
\begin{align}\label{eq Y e}
Y^{\e}(t,x)=&\sum_{k\geq1}\int\left\langle G(t-s,x-\cdot)\sigma\left(u^0(s,\cdot)+\sqrt\e
Y^{\e}(s,\cdot)\right),e_k(\cdot)\right\rangle_{\HH}dB^k_s\notag\\
&+
\int_0^t G(t-s)\ast\frac{b\big(u^0(s,\cdot)+\sqrt{\e}Y^{\e}(s,\cdot)\big)-b(u^0(s,\cdot))}{\sqrt{\e}}(x)ds.\end{align}

For any $\e\in]0,1]$, set the mapping $\tilde \sigma_{\e}, \tilde b_\e:[0,T]\times\rr^3\times\rr\rightarrow\rr$ by
\begin{align*}
\tilde \sigma_{\e}(t,x,r):=&\sigma(u^0(t,x)+\sqrt\e r), \\
  \tilde b_\e(t,x,r):=&\frac{1}{\sqrt\e}\left[b(u^0(t,x)+\sqrt\e r)-b(u^0(t,x)) \right].
 \end{align*}
By the Lipschitz continuity of $\sigma$ and $b$, we know that $\tilde \sigma_{\e}$ and $\tilde b_{\e}$ are Lipschitz continuous in the third variable uniformly over $(t,x)\in[0,T]\times \mathbb R^3$ and $\e\in]0,1]$.
Using the same strategy of the proof for Theorem 2.3 in \cite{OS}, one can obtain   that for any $\alpha \in\II$,   $p> {4}/{\alpha}$,
\begin{equation}\label{YE}
\sup_{\e\in]0,1]}\ee\left[|Y^\e(t,x)-Y^\e(s,y)|^p\right]\leq C(|t-s|+|x-y|)^{\alpha p}
\end{equation}
and
\begin{equation}\label{Y}
\ee\left[|Y(t,x)-Y(s,y)|^p\right]\leq C(|t-s|+|x-y|)^{\alpha p}.
\end{equation}
Putting (\ref{YE}) and (\ref{Y}) together, we obtain the H\"older continuity of $V^\e$, that is for any $\alpha \in\II$,   $p> {4}/{\alpha}$, there exists a constant $C>0$ such that
\beq\label{V}
\sup_{\e\in]0,1]}\ee\left[\left|(Y^\e(t,x)-Y(t,x)\right)-\left(Y^\e(s,y)-Y(s,y))\right|^p\right]\le C(|t-s|+|x-y|)^{\alpha p}.
\nneq
By Lemma \ref{Lem 3}, \eqref{bound V} and \eqref{V}, we obtain  that for any $\tilde \alpha\in ]0, \alpha-4/p[, r\in [1,p[$,
$$
\lim_{\e\to0}\ee[\| Y^\e-Y\|^r_{\tilde{\alpha}}]=0.
$$
By the  arbitrariness of $p>  4/\alpha, r\in[1,p[$, we  get the desired result in Theorem \ref{CLT} for any $\alpha\in \II, p\ge2$.

The proof is complete.
\end{proof}

\section{Proof of Theorem \ref{MDP}}


 Let $\PP$ denote the set of predictable processes belonging to $L^2(\Omega\times[0,T];\ \HH)$. For any $N>0$, we define
$$\HH^N_T:=\{h\in \HH_T: \|h\|_{\HH_T}\leq N\},$$
 $$\PP^N_T:=\{v\in \PP: v\in\HH^N_T, a.s. \},$$
and we endow $\HH^N_T$   with the weak topology of $\HH_T.$




 Let $Z^\e:=(u^\e-u^0)/(\sqrt\e h(\e))$. Then
   \begin{align}\label{eq Z e2}
Z^{\e}(t,x)
=&\frac{1}{h(\e)}\sum_{k\ge1}\int_0^t\left\langle G(t-s,x-\cdot)\sigma\big(u^0(s,\cdot)+\sqrt\e h(\e)Z^{\e}(s,\cdot)\big),e_k(\cdot)
\right\rangle_{\HH}dB_s^k\notag\\
& +\int_0^tG(t-s)\ast\left(\frac{b\big(u^0(s,\cdot)+\sqrt{\e}h(\e)Z^{\e}(s,\cdot)\big)-b(u^0(s,\cdot))}{\sqrt\e h(\e)}\right)(x)ds.
\end{align}

\vskip0.3cm

For any $\e\in]0,1]$ and $v\in\PP_{T}^N$, consider the controlled equation $Z^{\e,v}$ defined by
\begin{align}\label{eq Z e}
&Z^{\e,v}(t,x)\notag\\
 =&\frac{1}{h(\e)}\sum_{k\ge1}\int_0^t\left\langle G(t-s,x-\cdot)\sigma\big(u^0(s,\cdot)+\sqrt\e h(\e)Z^{\e,v}(s,\cdot)\big),e_k(\cdot)
\right\rangle_{\HH}dB_s^k\notag\\
&  +\int_0^t\left\langle
G(t-s,x-\cdot)\sigma\left(u^0(s,\cdot)+\sqrt{\e}h(\e)Z^{\e,v}(s,\cdot)\right),v(s,\cdot)\right\rangle_{\HH}ds\notag\\
& +\int_0^tG(t-s)\ast\left(\frac{b\big(u^0(s,\cdot)+\sqrt{\e}h(\e)Z^{\e,v}(s,\cdot)\big)-b(u^0(s,\cdot))}{\sqrt\e h(\e)}\right)(x)ds.
\end{align}

Following  the proof of Theorem 2.3 in \cite{OS},  similarly  to \eqref{eq Y e},  one can prove that Eq.\eqref{eq Z e} admits a unique solution $\{Z^{\e,v}(t,x);\ (t,x)\in[0,T]\times\rr^3\}$ satisfying  that for any $p\in[2,\infty[$,
\beq\label{eq Z e estimate 1}
\sup_{\e\in]0,1]}\sup_{v\in\PP_{T}^N}\sup_{(t,x)\in[0,T]\times\rr^3}\ee\left[|Z^{\e,v}(t,x)|^p\right]<\infty,
\nneq
and there exists $C>0$ such that for $(t,x),(s,y)\in[0,T]\times D$ and $\alpha \in \II$,
\begin{equation}\label{Holder continuity 1}
\sup_{\e\in]0,1]}\sup_{v\in{\PP}^N_T}\ee\left[|Z^{\e,v}(t,x)-Z^{\e,v}(s,y)|^p\right]\leq C(|t-s|+|x-y|)^{\alpha p}.
\end{equation}
  Particularly, taking $v\equiv0$, we know that
for any $p\in[2,\infty[$,
\beq\label{eq Z e estimate 2}
\sup_{\e\in]0,1]} \sup_{(t,x)\in[0,T]\times\rr^3}\ee\left[|Z^{\e }(t,x)|^p\right]<\infty,
\nneq
and
\begin{equation}\label{Holder continuity 2}
\sup_{\e\in]0,1]} \ee\left[|Z^{\e }(t,x)-Z^{\e }(s,y)|^p\right]\leq C(|t-s|+|x-y|)^{\alpha p}.
\end{equation}

Recall $Z^h$ defined in Eq.\eqref{eq sk}.  Consider the following conditions:
\begin{itemize}
 \item[(a)]For any family $\{v^\e;\ \e>0\}\subset\PP_{T}^N$ which converges in distribution as $\e\rightarrow 0$ to $v\in \PP_{T}^N$, as
     $\HH_T^N$-valued random variables,
       $$
       \lim_{\e\rightarrow 0}Z^{\e,v^{\e}}=Z^v\quad \text{
       in distribution},
       $$
       as $\EE_{\alpha}$-valued random variables, where $Z^v$ denotes the solution of Eq.\eqref{eq sk} corresponding to the $\HH_T^N$-valued random variable $v$ (instead of a deterministic function $h$);
 \item[(b)]The set $\{Z^h;\ h\in \HH_T^N\}$ is a compact set of $\EE_{\alpha}$, where $Z^h$ is the solution of Eq.\eqref{eq sk}.
\end{itemize}

\bprf[The proof of Theorem \ref{MDP}]
Applying  \cite[Theorem 6]{BDM} to the solution functional  $\GG^{\e}: \CC([0,T];\rr^{\infty})\rightarrow \EE_{\alpha}: \GG^{\e}(\sqrt \e B):=Z^{\e}$, the solution of Eq.\eqref{eq Z e2}, and the solution functional  $\GG^0: \HH_T\rightarrow \EE^{\alpha}$, $\GG^0(h):=Z^h$, the solution of Eq.\eqref{eq sk}, conditions (a) and (b) above imply the MDP result in Theorem \ref{MDP}.  The verification of  condition (a)  will be given in Proposition \ref{Prop weak convergence}. Since $\HH_T^N$ is compact in the weak topology of $\HH$,  condition (b) follows from the continuity of the mapping  $ \HH_T^N \ni h\rightarrow Z^h\in\EE_{\alpha}$  which will be proved in Proposition \ref{Prop conv}.
The proof is complete.
\nprf

\vskip0.3cm
\begin{proposition}\label{Prop weak convergence}
{\rm Under conditions  {\bf(H)} and {\bf (D)},   for any  family $\{v^\e;\ \e>0\}\subset\PP_{T}^N$ which converges in distribution as $\e\rightarrow 0$ to $v\in \PP_{T}^N$, as
     $\HH_T^N$-valued random variables,  it holds that
       $$
       \lim_{\e\rightarrow 0}Z^{\e,v^{\e}}=Z^v, \quad \text{
       in distribution},
       $$
       as $\EE_{\alpha}$-valued random variables.
 }
\end{proposition}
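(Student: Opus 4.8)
The plan is to run the by-now standard scheme of the weak convergence approach: (i) establish tightness of the controlled solutions $\{Z^{\e,v^\e}\}$ in $\EE_\alpha$, (ii) invoke the Skorokhod representation theorem to reduce the claimed convergence in distribution to an almost sure statement, and (iii) pass to the limit in the controlled equation \eqref{eq Z e} and identify the limit as $Z^v$ via the uniqueness of solutions of the skeleton equation \eqref{eq sk}. For step (i), fix $\alpha'\in\II$ with $\alpha'>\alpha$ and $p$ large; the uniform bounds \eqref{eq Z e estimate 1} and \eqref{Holder continuity 1}, together with the Garsia--Rodemich--Rumsey inequality underlying Lemma \ref{Lem 3}, give $\sup_{\e}\ee\big[\|Z^{\e,v^\e}\|_{\alpha'}^{p}\big]<\infty$, and since a bounded set of $\CC^{\alpha'}$ is precompact in $\EE_\alpha=\CC^{\alpha,0}$, Chebyshev's inequality yields tightness of the laws of $Z^{\e,v^\e}$ on $\EE_\alpha$. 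As $\HH_T^N$ with the weak topology of $\HH_T$ is a compact metrizable space, the joint laws of $(Z^{\e,v^\e},v^\e)$ on $\EE_\alpha\times\HH_T^N$ are tight. Fixing any subsequence, extracting a further one and applying the Skorokhod representation theorem, we may assume that on a common probability space there are copies (not relabelled) with $(Z^{\e,v^\e},v^\e)\to(Z,\bar v)$ a.s. in $\EE_\alpha\times\HH_T^N$, where $\bar v$ is distributed as $v$; it then suffices to prove $Z=Z^{\bar v}$ a.s.

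Write \eqref{eq Z e} as $Z^{\e,v^\e}=\Psi^\e(Z^{\e,v^\e},v^\e)+R^\e$, where $R^\e$ denotes the pure stochastic integral carrying the prefactor $1/h(\e)$ and $\Psi^\e(Z,v)$ collects the $\sigma$-drift term $\int_0^t\langle G(t-s,x-\cdot)\sigma(u^0(s,\cdot)+\sqrt\e h(\e)Z(s,\cdot)),v(s,\cdot)\rangle_\HH\,ds$ and the $b$-term $\int_0^tG(t-s)\ast\frac{b(u^0+\sqrt\e h(\e)Z)-b(u^0)}{\sqrt\e h(\e)}(x)\,ds$; note that the right-hand side of \eqref{eq sk} is exactly $\Psi^0(Z,v)$. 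Thus it remains to show $\Psi^\e(Z^{\e,v^\e},v^\e)\to\Psi^0(Z,\bar v)$ a.s. in $\EE_\alpha$ and $R^\e\to 0$, for then $Z$ solves \eqref{eq sk} with control $\bar v$, whence $Z=Z^{\bar v}$ by the uniqueness of solutions of \eqref{eq sk} (cf. Proposition \ref{Prop conv}); since the subsequential limit does not depend on the subsequence, the whole family converges in distribution to $Z^v$. For $R^\e$, arguing as in the computation leading to \eqref{T2} — Burkholder's inequality, the linear growth of $\sigma$, the finiteness of $\int_0^T\int_{\rr^3}|\FF G(t-s)(\xi)|^2\mu(d\xi)\,ds$ and \eqref{eq Z e estimate 1} — one gets $\ee\big[|R^\e|_{T,\infty}^p\big]\le C\big(h(\e)^{-1}+\sqrt\e\,\big)^p\to 0$, so on the Skorokhod space $|R^\e|_{T,\infty}\to 0$ a.s. along a further subsequence.

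For the $b$-part of $\Psi^\e$, the mean value theorem gives a random $\theta^\e\in(0,1)$ with
\[
\frac{b\big(u^0+\sqrt\e h(\e)Z^{\e,v^\e}\big)-b(u^0)}{\sqrt\e h(\e)}=b'\big(u^0+\theta^\e\sqrt\e h(\e)Z^{\e,v^\e}\big)Z^{\e,v^\e};
\]
since $|b'|\le K$ by \eqref{H3'} and $b'$ is Lipschitz by \eqref{H3}, while $\sqrt\e h(\e)\to 0$ and $Z^{\e,v^\e}\to Z$ in $\EE_\alpha$ (hence uniformly on the bounded set $[0,T]\times D$), this converges a.s. uniformly to $b'(u^0)Z$, and the convolution $\int_0^tG(t-s)\ast[\,\cdot\,](x)\,ds$ passes to the limit uniformly in $(t,x)$ by Hölder's inequality against the finite measure $G(t-s,dy)\,ds$. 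The main obstacle is the $\sigma$-drift part of $\Psi^\e$, where the strong (uniform) convergence of the $\sigma$-factor must be combined with the merely weak convergence $v^\e\rightharpoonup\bar v$ in $\HH_T$. I would first replace $\sigma\big(u^0+\sqrt\e h(\e)Z^{\e,v^\e}\big)$ by $\sigma(u^0)$: by the Lipschitz continuity of $\sigma$, the Cauchy--Schwarz inequality in $\HH$, the bound $\|v^\e\|_{\HH_T}\le N$, and $\sqrt\e h(\e)\,|Z^{\e,v^\e}|_{T,\infty}\to 0$ (which follows from \eqref{Holder continuity 1} and $\sqrt\e h(\e)\to 0$), the resulting error is $o(1)$ uniformly in $(t,x)$.

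It then remains to show that $\int_0^t\langle G(t-s,x-\cdot)\sigma(u^0(s,\cdot)),\,v^\e(s,\cdot)-\bar v(s,\cdot)\rangle_\HH\,ds\to 0$ uniformly for $(t,x)\in[0,T]\times D$. For each fixed $(t,x)$ the kernel $k_{t,x}:(s,\cdot)\mapsto G(t-s,x-\cdot)\sigma(u^0(s,\cdot))$ is a fixed element of $\HH_T$ — because $u^0$ is bounded (it solves \eqref{eq u0}) and $\int_0^T\int_{\rr^3}|\FF G(t-s)(\xi)|^2\mu(d\xi)\,ds<\infty$ — so the integral equals $\langle k_{t,x},v^\e-\bar v\rangle_{\HH_T}$, which tends to $0$ by the weak convergence $v^\e-\bar v\rightharpoonup 0$; uniformity in $(t,x)$ follows from the norm-continuity of $(t,x)\mapsto k_{t,x}$ as an $\HH_T$-valued map together with the uniform bound $\|v^\e-\bar v\|_{\HH_T}\le 2N$ (alternatively, from the pointwise convergence combined with an equi-Hölder bound for this term obtained as in \eqref{Holder continuity 1}, via Lemma \ref{Lem 3}). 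Collecting these three estimates shows $\Psi^\e(Z^{\e,v^\e},v^\e)\to\Psi^0(Z,\bar v)$ a.s. in $\EE_\alpha$, hence $Z=Z^{\bar v}$ and the proposition follows. The genuine difficulty throughout is precisely this interplay between the weak convergence of the controls and the strong convergence of the solutions inside the nonlinear coefficient $\sigma$, together with the need to make the passage to the limit uniform in $(t,x)$ so as to conclude in the Hölder space $\EE_\alpha$.
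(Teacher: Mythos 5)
Your proof is correct in substance but follows a genuinely different architecture from the paper's. The paper applies the Skorokhod representation only to the controls $(v^\e,v,B)$ and then proves the stronger statement of $L^p$-convergence of the H\"older norm: it verifies the hypotheses of Lemma \ref{Lem 3} through pointwise-in-$(t,x)$ moment estimates based on the decomposition into $A_1^\e$ (the stochastic integral, of order $h^{-p}(\e)$), $A_{2,1}^\e$ (the $\sigma$-replacement error, of order $\e^{p/2}h^p(\e)$), $A_{2,2}^\e$ (the weak-convergence term, made uniform via the H\"older regularity of the pathwise integral, Arzel\`a--Ascoli and dominated convergence) and $A_3^\e$ (the $b$-term, via the mean value theorem), closing the loop with Gronwall's inequality; no tightness or identification-of-limit step is needed and a quantitative rate comes out. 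You instead prove tightness of $Z^{\e,v^\e}$ in $\EE_\alpha$, apply Skorokhod to the pair $(Z^{\e,v^\e},v^\e)$, pass to the limit pathwise in the controlled equation, and identify the limit via uniqueness of \eqref{eq sk} and the subsequence principle; the analytic core of your limit passage (replacing $\sigma(u^0+\sqrt\e h(\e)Z^{\e,v^\e})$ by $\sigma(u^0)$, testing $v^\e-v\rightharpoonup 0$ against the fixed $\HH_T$-kernels $G(t-s,x-\cdot)\sigma(u^0(s,\cdot))$ with uniformity from norm-compactness or equicontinuity, and the mean value theorem for $b$) coincides with the paper's treatment of $A_{2,1}^\e$, $A_{2,2}^\e$, $A_3^\e$, so the two proofs trade the moment Gronwall loop for a compactness-plus-identification argument. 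Three points need explicit care in your version: (i) the wave kernel is nonlocal, so $\Psi^\e(Z,v)(t,x)$, and hence your residual $R^\e=Z^{\e,v^\e}-\Psi^\e(Z^{\e,v^\e},v^\e)$, depends on $Z$ over the light cone, which exits $[0,T]\times D$; the tightness/Skorokhod step should be carried out on a bounded domain containing the domain of dependence of $D$ (this is also what makes $R^\e$ a measurable functional of the represented pair, so that its moment bound transfers to the new space even though the Brownian motion is not part of your representation); (ii) $\sqrt\e h(\e)|Z^{\e,v^\e}|_{T,\infty}\to0$ does not follow from \eqref{Holder continuity 1} alone, which only controls increments; use \eqref{eq Z e estimate 1} together with a Kolmogorov/Garsia--Rodemich--Rumsey bound on the enlarged compact domain, or simply the a.s. H\"older-norm convergence you have just extracted; (iii) what you need is uniqueness of \eqref{eq sk} with a (random) control, which is immediate by Gronwall since that equation is affine with bounded coefficient $b'(u^0)$ --- Proposition \ref{Prop conv} gives continuity of $h\mapsto Z^h$, not uniqueness.
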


\begin{proof}

By the Skorokhod representation theorem, there exist a probability space $(\bar\Omega,\bar\FF,(\bar\FF_t),\bar\pp)$, and, on this basis, a sequence
of independent Brownian motions $\bar B=(\bar B^k)_{k\ge1}$ and also a family of $\bar \FF_t$-predictable processes $\{\bar v^\e;\ \e>0\}$, $\bar v$  taking values on $\HH_T^N$, $\bar\pp$-a.s., such that the joint law of $(v^\e,v, B)$ under
$\pp$ coincides with that of  $(\bar v^\e,\bar v, \bar B)$ under $\bar\pp$ and
\beq\label{eq conver weak}
\lim_{\e\rightarrow0}\langle\bar v^\e-\bar v, g \rangle_{\HH_T}=0, \quad\forall g\in\HH_T,  \bar \pp\mbox{-a.s.}.  \ \
\nneq
Let $\bar Z^{\e,\bar v^\e}$ be the solution to a similar equation as \eqref{eq Z e} replacing $v$  by $\bar v^\e$ and $B$ by $\bar B$, and
let $\bar Z^{\bar v}$ be the solution of Eq.\eqref{eq sk} corresponding to the $\HH_T^N$-valued random variable $\bar v$ (instead of a deterministic function $h$).

Now, we shall prove that for any $p\ge2, \alpha\in\II$,
\beq\label{eq b conv}
\lim_{\e\rightarrow0}\bar \ee\left[ \|  \bar Z^{ \e,  \bar v^\e}-  \bar Z^{  \bar v} \|_{\alpha}^p\right]=0,
\nneq
which implies the validity of Proposition \ref{Prop weak convergence}. Here the expectation in \eqref{eq b conv} refers to the probability $\bar P$.

From now on, we drop the bars in the notation for the sake of simplicity, and we denote
$$
X^{\e,v^\e,v}:=Z^{\e,v^\e}-Z^v.
$$

By \eqref{eq Z e estimate 1} and \eqref{eq Z e estimate 2}, we know that for any $p\ge2$,
$$
\sup_{\e\in]0,1]}\sup_{v\in\HH^N_T}\sup_{(t,x)\in[0,T]\times\rr^3}\ee\left[|X^{\e,v^\e,v}(t,x)|^p\right]<\infty.
$$

According to Lemma \ref{Lem 3},  to prove \eqref{eq b conv}, it is sufficient to prove that  for any $ (t,x),(s,y)\in[0,T]\times D$ and $p\ge 2$, the following  conditions hold:
\begin{itemize}
  \item[(1)] { Pointwise convergence}:
  \beq\label{eq Z 1}
  \lim_{\e\to 0}\ee\left[|X^{\e,v^\e,v}(t,x)|^p\right] =0.
    \nneq
  \item[(2)] { Estimation of the increments}:  there exists a positive constant $C$ satisfied that
  \begin{align}\label{eq Z 2}
  \sup_{\e\in]0,1]}\ee \left[\left|X^{\e,v^\e,v} (t,x)-X^{\e,v^\e,v} (s,y)\right|^p\right]\le C(|t-s|+|x-y|)^{\alpha p}.
    \end{align}
\end{itemize}

 By \eqref{Holder continuity 1} and \eqref{Holder continuity 2}, it is easy to obtain \eqref{eq Z 2}.  Now,  it remains to prove \eqref{eq Z 1}.

 Notice that for any $(t,x)\in[0,T]\times \rr^3$,
\begin{align}\label{eq I0}
&Z^{\e,v^\e}(t,x)-Z^{v}(t,x)\notag\\
=&\frac{1}{h(\e)}\sum_{k\geq1}\int_0^t\left\langle G(t-s,x-\cdot)\sigma\big(u^0(s,\cdot)+\sqrt\e h(\e)Z^{\e,v^\e}(s,\cdot)\big), e_k(\cdot)
\right\rangle_{\HH}dB^k_s\notag\\
&+\Bigg\{\int_0^t\left\langle G(t-s,x-\cdot)\sigma\big(u^0(s,\cdot)+\sqrt\e h(\e)Z^{\e,v^\e}(s,\cdot)\big), v^\e(s,\cdot)
\right\rangle_{\HH}ds\notag\\
&\ \ \ \ \ \  -\int_0^t\left\langle G(t-s,x-\cdot)\sigma\big(u^0(s,\cdot)\big),v(s,\cdot) \right\rangle_{\HH}ds\notag\Bigg\}\\
&+\Bigg\{\int_0^tG(t-s)\ast\left[\frac{b\big(u^0(s,\cdot)+\sqrt\e h(\e)Z^{\e,v^\e}(s,\cdot)\big) -b(u^0(s,\cdot))}{\sqrt\e h(\e)}\right] (x)ds\notag\\
&\ \ \ \ \ \  -\int_0^t\left\{G(t-s)\ast\left[b'(u^0(s,\cdot))Z^v(s,\cdot)\right](x) \right\}ds\Bigg\}\notag\\
=:&A_1^\e(t,x)+A_2^\e(t,x)+A_3^\e(t,x).
\end{align}

{\bf Step 1}.  For the first term $A_1^\e(t,x)$, noticing that $u^0$ is bounded,  by   Burkholder's inequality, H\"older's inequality  and the linear growth property of $\sigma$, we have
\begin{align}\label{A1}
&\sup_{(t,x)\in[0,T]\times\rr^3}\ee\left[|A_1^{\e}(t,x)|^p\right]\notag\\
\le &\frac{1}{h^p(\e)}
\sup_{(t,x)\in[0,T]\times\rr^3}\ee\left( \int_0^t\left\|G(t-s,x-\cdot)\sigma\big(u^0(s,\cdot)+\sqrt\e h(\e)Z^{\e,v^\e}(s,\cdot)\big)\right\|_{\HH}^2ds\right)^{\frac p2}\notag\\
\leq&\frac{C(p,u^0,K)}{h^p(\e)} \left(\int_0^t\int_{\rr^3}|\FF G(t-s)(\xi)|^2\mu(d\xi)ds\right)^{\frac{p}{2}-1}\notag\\
  & \times \int_0^t\left( 1+ \sup_{(r,z)\in[0,s]\times\rr^3}\ee\left[|Z^{\e,v^\e}(r,z)|^p\right]\right)\int_{\rr^3}|\FF
  G(t-s)(\xi)|^2\mu(d\xi)ds \notag\\
\leq&\frac{C(p,u^0,K, T)}{h^p(\e)},
\end{align}
where  (\ref{eq Z e estimate 1}) is used   in the last inequality.

{\bf Step 2}.  The second term is further divided into two terms:
\begin{align}\label{A2}
A_2^{\e}(t,x)
=& \int_0^t\left\langle G(t-s,x-\cdot)\left[\sigma\big(u^0(s,\cdot)+\sqrt{\e} h(\e)Z^{\e,v^{\e}}(s,\cdot)\big)-\sigma(u^0(s,\cdot))\right],
v^{\e}(s,\cdot)\right\rangle_{\HH}ds\notag\\
&  +\int_0^t\left\langle G(t-s,x-\cdot)\sigma(u^0(s,\cdot)), v^{\e}(s,\cdot)-v(s,\cdot)\right\rangle_{\HH}ds\notag\\
=:&A_{2,1}^\e(t,x)+A_{2,2}^\e(t,x).
\end{align}

By Cauchy-Schwarz's inequality, H\"older's inequality,  the Lipschitz continuity of $\sigma$ and the fact that $v^{\e}\in\PP_T^N$, we obtain that
\begin{align}\label{A21}
&\sup_{(t,x)\in[0,T]\times\rr^3}\ee[|A_{2,1}^\e(t,x)|^p]\notag\\
 \leq & \e^{\frac p2} h^p(\e) K^p  \sup_{(t,x)\in[0,T]\times\rr^3}\ee\left[\left(\int_0^t\left\| G(t-s,x-\cdot)Z^{\e, v^{\e}}(s,\cdot)  \right\|_{\HH}^2ds\right)^{\frac p2}\cdot\left(\int_0^t \|v^{\e}(s,\cdot)\|^2_{\HH}ds\right)^{\frac p2}\right]\notag\\
 \leq & \e^{\frac p2} h^p(\e)  N^p K^p \left(\int_0^t\int_{\rr^3}|\FF G(t-s)(\xi)|^2\mu(d\xi)ds\right)^{\frac{p}{2}-1}\notag\\
  & \times \int_0^t\left(  \sup_{(r,z)\in[0,s]\times\rr^3}\ee\left[|Z^{\e,v^\e}(r,z)|^p\right]\right)\int_{\rr^3}|\FF
  G(t-s)(\xi)|^2\mu(d\xi)ds \notag\\
\leq& \e^{\frac p2} h^p(\e) C(N, p, K, T),
\end{align}
where  (\ref{eq Z e estimate 1}) is also used   in the last inequality.

Next,  we will show that
\beq\label{eq A220} \lim_{\e \rightarrow 0} \sup_{(t,x)\in[0,T]\times \rr^3}\ee\left[\left|A^{\e}_{2,2}(t,x)\right|^p\right]=0.
\nneq
  By H\"older's inequality with respect to the measure on $\rr^3$ given by $|\FF G(t-s)(\xi)|^2\mu(d\xi)$,  we obtain that for any   $(t,x)\in[0,T]\times\rr^3$,
 \begin{align*}
  &\int_0^t\|G(t-s, x-\cdot)\sigma(u^0(s,\cdot))\|_{\HH}^2ds\\
\le &C\int_0^tds\left(\int_{\rr^3} |\FF G(t-s)(\xi)|^2\mu(d\xi) \right)\times\left(1+ \sup_{(s,y)\in[0,T]\times\rr^3} |u^0(s,y)|^2 \right)\\
<&+\infty.
 \end{align*}
   This implies that for any $(t,x)\in[0,T]\times\rr^3$, the function  $\{G(t-s, x-y)\sigma(u^0(s,y));\ (s,y)\in[0,T]\times\rr^3 \}$ takes its values in $\HH_T$.   Since $v^{\e}\rightarrow v$ weakly in $\HH_T^N$,  we know that
   \beq\label{eq conv}
   \lim_{\e\rightarrow0}A^{\e}_{2,2}(t,x)=0,\  \ \text{a.s.}.
   \nneq
By Cauchy-Schwarz's inequality on the Hilbert space $\HH_T$ and the facts that $\|v^{\e}\|_{\HH_T}\le N, \|v\|_{\HH_T}\le N$, we  obtain that
\begin{align}\label{eq bound}
|A^{\e}_{2,2}(t,x)|\leq &\left(\int_0^t\|G(t-s, x-\cdot)\sigma(u^0(s,\cdot))\|_{\HH}^2
ds\right)^{\frac12}\cdot\left(\int_0^t\|v^{\e}(s,\cdot)-v(s,\cdot)\|_{\HH}^2ds\right)^{\frac12}\notag\\
\leq &C(N, T)\left(\int_0^t\|G(t-s, x-\cdot)\sigma(u^0(s,\cdot))\|_{\HH}^2ds\right)^{\frac12}\notag\\
                      \leq & C(N,K,T)<+\infty,
\end{align}
here $C(N,K,T)$ is independent of $(\e, t, x)$. By the H\"older regularity of the path-wise integral
 $$
 \int_0^t\left\langle G(t-s,x-\cdot)\sigma(u^0(s,\cdot)), v^{\e}(s,\cdot)-v(s,\cdot)\right\rangle_{\HH}ds,
 $$
(see \cite[Section 2]{OS}),  we know that a.s., $\{A^{\e}_{2,2}(t,x), (t,x)\in [0,T]\times D\}$ has H\"older continuous sample paths of degree $\alpha\in \II$ jointly in $(t,x)$, and
$$
\sup_{\e\in]0,1]}\|A^{\e}_{2,2}\|_{\alpha}<\infty,\ \  \text{a.s.}.
$$
This, in particular,  implies that $\{A^{\e}_{2,2}(t,x);\ (t,x)\in [0,T]\times D\}$ is equicontinuous.
By the arbitrariness of $D\subset\rr^3$, we known that $\{A^{\e}_{2,2}(t,x);\ (t,x)\in [0,T]\times \rr^3\}$ is equiv-continuous.  Thus, by \eqref{eq conv}, \eqref{eq bound}  and Arzel\`a-Ascoli Theorem, we know that $A^{\e}_{2,2}$  converges to $0$ in the space $\CC([0,T]\times \rr^3;\rr)$, a.s. as $\e \to 0$. This implies that
\beq\label{A22}
\lim_{\e\rightarrow 0}\sup_{(t,x)\in[0,T]\times \rr^3}|A^{\e}_{2,2}(t,x)|= 0,\quad \text{\ a.s..}
\nneq

By the dominated convergence theorem, \eqref{eq bound} and \eqref{A22}, we obtain that
  $$
   \lim_{\e\rightarrow0}\ee\left[\sup_{(t,x)\in[0,T]\times \rr^3}|A^{\e}_{2,2}(t,x)|^p\right]=0,
  $$
which is stronger than \eqref{eq A220}.

{\bf Step 3}. For the third term $A_3^{\e}$,   using the same argument in the proof of \eqref{eq b1}, we have
\begin{align*}
&|A_3^{\e}(t,x)|\notag\\
\leq &\Bigg|\int_0^t\Bigg\{G(t-s)\ast\Bigg[\frac{b\big(u^0(s,\cdot)+\sqrt{\e}h(\e)Z^{\e,v^{\e}}(s,\cdot)\big)-b(u^0(s,\cdot)
)}{\sqrt{\e}h(\e)}-b'(u^0(s,\cdot))Z^{\e,v^{\e}}(s,\cdot) \Bigg](x) \Bigg\}ds\Bigg|\notag\\
 &+\Bigg|\int_0^t\left\{G(t-s)\ast\left[b'(u^0(s,\cdot))\big(Z^{\e,v^{\e}}(s,\cdot)-Z^v(s,\cdot)\big) \right](x)
  \right\}ds\Bigg|\notag\\
  \leq &C(K')\sqrt{\e}h(\e)\int_0^t G(t-s)\ast \left|Z^{\e,v^{\e}}(s,\cdot)\right|^2(x)ds\\
  &+C(K)\int_0^t G(t-s)\ast\left|Z^{\e,v^{\e}}(s,\cdot)-Z^v(s,\cdot)\right|(x)ds.
\end{align*}

By H\"older's inequality with respect to the Lebesgue measure on $[0,t]\times\rr^3$ and \eqref{eq Z e estimate 1}, we have
\begin{align}\label{A3}
&\sup_{(t,x)\in[0,T]\times \rr^3}\ee\left[|A_3^{\e}(t,x)|^p\right]\notag\\
\leq & \e^{\frac p2} h^p(\e)C(p, K') \sup_{(t,x)\in[0,T]\times \rr^3} \left(\int_0^t\int_{\rr^3} G(t-s,x-y)dsdy\right)^{p-1}\notag\\
&\ \ \ \ \times \int_0^t\int_{\rr^3}G(t-s,x-y) \sup_{(r,z)\in[0,s]\times \rr^3}
\ee\left[\left|Z^{\e,v^{\e}}(r,z)\right|^{2p}\right]dsdy\notag\\
                         &+C(p,K) \sup_{(t,x)\in[0,T]\times \rr^3} \left(\int_0^t\int_{\rr^3} G(t-s,x-y)dsdy\right)^{p-1}\notag\\
                      &\ \ \ \ \times   \int_0^t\int_{\rr^3}G(t-s,x-y)\sup_{(r,z)\in[0,s]\times
                         \rr^3}\ee\left[\left|Z^{\e,v^{\e}}(r,z)-Z^v(r,z)\right|^p\right] dsdy\notag\\
                    \leq & \e^{\frac p2} h^p(\e)C(p, K',T) +C(p,K,T)\int_0^t\sup_{(r,z)\in[0,s]\times \rr^3}\ee\left[\left|X^{\e,v^{\e},v}(r,z)\right|^p\right] ds.
\end{align}

Putting \eqref{eq I0}, \eqref{A1}, \eqref{A21}, \eqref{A22} and \eqref{A3} together, we have
\begin{align*}
&\sup_{(s,x)\in[0,t]\times \rr^3}\ee\left[|X^{\e,v^{\e},v}(s, x)|^p\right]\notag\\
 \le&C(N,p,u^0,K,K',T)\Bigg(h^{-p}(\e)+\e^{\frac p2} h^p(\e)+\sup_{(s,x)\in[0,t ]\times \rr^3}\ee\left[|A_{2,2}^\e(s, x)|^p\right]\notag\\
 &+\int_0^t\sup_{(r,z)\in[0,s]\times \rr^3}\ee\left[\left|X^{\e,v^{\e},v}(r, z)\right|^p\right]ds \Bigg).
\end{align*}
By   Gronwall's inequality and \eqref{eq A220}, we obtain that
\begin{align*}
&\sup_{(s,x)\in[0,T]\times \rr^3}\ee\left[|X^{\e,v^{\e},v}(s,x) |^p\right]\notag\\
\le&C(N,p,u^0,K,K',T)\left(h^{-p}(\e)+\e^{\frac p2} h^p(\e)+\sup_{(s,x)\in[0,t ]\times \rr^3}\ee\left[|A_{2,2}^\e(s, x)|^p\right]\right)\notag\\
&\longrightarrow 0, \ \  \text{as } \e\rightarrow 0.
\end{align*}

The proof is complete.
\end{proof}

\vskip0.3cm

\begin{proposition}\label{Prop conv}
{\rm Under conditions {\bf(H)} and {\bf (D)},  for any $\alpha\in \II$,
 the mapping $ \HH_T^N \ni h\rightarrow Z^h\in\EE_{\alpha}$ is continuous with respect to the weak topology. }
\end{proposition}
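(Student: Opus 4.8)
The plan is to carry out, in a deterministic and much simpler form, the argument already used for Proposition \ref{Prop weak convergence}. Let $\{h_n\}_{n\ge1}\subset\HH_T^N$ converge weakly in $\HH_T$ to $h\in\HH_T^N$; since $\HH$, and hence $\HH_T$, is separable, $\HH_T^N$ with the weak topology is metrizable, so it is enough to show $Z^{h_n}\to Z^h$ in $\EE_\alpha$. First I would fix $\alpha''\in\II$ with $\alpha<\alpha''$ and record the uniform a priori estimates for the skeleton equation \eqref{eq sk}: arguing as in the derivation of \eqref{eq Z e estimate 1} and \eqref{Holder continuity 1} --- in fact more easily, since \eqref{eq sk} carries no stochastic integral, so that one needs only H\"older's inequality with respect to the finite measure $G(t-s,dy)\,ds$ on $[0,T]\times\rr^3$, the linear growth of $\sigma$, the bound $|b'|\le K$ from \eqref{H3'}, Gronwall's lemma, and the H\"older regularity of the path-wise integral recalled in \cite[Section~2]{OS} --- one obtains
$$
\sup_{h\in\HH_T^N}|Z^h|_{T,\infty}<\infty,\qquad \sup_{h\in\HH_T^N}\|Z^h\|_{\alpha''}<\infty .
$$
In particular $\sup_n\|Z^{h_n}-Z^h\|_{\alpha''}<\infty$, so by an elementary interpolation between $\|\cdot\|_{\infty}$ and $\|\cdot\|_{\alpha''}$ on the bounded set $[0,T]\times D$ it will be enough to prove that
$$
\|Z^{h_n}-Z^h\|_{\infty}:=\sup_{(t,x)\in[0,T]\times\rr^3}\big|Z^{h_n}(t,x)-Z^h(t,x)\big|\longrightarrow 0 .
$$

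Next I would use \eqref{eq sk} to write $Z^{h_n}-Z^h=A_1^n+A_2^n$, where
$$
A_1^n(t,x):=\int_0^t\big\langle G(t-s,x-\cdot)\sigma(u^0(s,\cdot)),(h_n-h)(s,\cdot)\big\rangle_{\HH}\,ds ,
$$
$$
A_2^n(t,x):=\int_0^t G(t-s)\ast\big[b'(u^0(s,\cdot))\big(Z^{h_n}(s,\cdot)-Z^h(s,\cdot)\big)\big](x)\,ds .
$$
For $A_2^n$, the bound $|b'|\le K$ and H\"older's inequality with respect to $G(t-s,dy)\,ds$ give, for every $t\le T$, $\sup_{(r,x)\in[0,t]\times\rr^3}|A_2^n(r,x)|\le C(K,T)\int_0^t\phi_n(s)\,ds$, where $\phi_n(t):=\sup_{(s,x)\in[0,t]\times\rr^3}|Z^{h_n}(s,x)-Z^h(s,x)|$. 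Hence, with $\Psi_n:=\sup_{(t,x)\in[0,T]\times\rr^3}|A_1^n(t,x)|$, one gets $\phi_n(t)\le\Psi_n+C(K,T)\int_0^t\phi_n(s)\,ds$, and Gronwall's lemma yields $\phi_n(T)\le e^{C(K,T)T}\Psi_n$. The proof therefore reduces to showing $\Psi_n\to 0$.

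Showing $\Psi_n\to 0$ is the step I expect to be the main obstacle, and I would handle it exactly as the term $A_{2,2}^{\e}$ in Step~2 of the proof of Proposition \ref{Prop weak convergence}. For each fixed $(t,x)$ the function $(s,y)\mapsto\mathbf{1}_{[0,t]}(s)\,G(t-s,x-y)\sigma(u^0(s,y))$ belongs to $\HH_T$ --- because $u^0$ is bounded, $\sigma$ has linear growth, and $\int_0^t\int_{\rr^3}|\FF G(t-s)(\xi)|^2\mu(d\xi)\,ds<\infty$ --- so the weak convergence $h_n\to h$ in $\HH_T$ gives $A_1^n(t,x)\to 0$ for every $(t,x)$. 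To upgrade this to uniform convergence I would use that $\|h_n-h\|_{\HH_T}\le 2N$: Cauchy-Schwarz in $\HH_T$ then bounds $\sup_n\sup_{(t,x)}|A_1^n(t,x)|$, while the H\"older regularity of the path-wise integral (\cite[Section~2]{OS}), which depends on $h_n-h$ only through its $\HH_T$-norm, bounds $\sup_n\|A_1^n\|_{\alpha}$; thus $\{A_1^n\}_n$ is uniformly bounded and equicontinuous on $[0,T]\times\rr^3$. By the Arzel\`a-Ascoli argument used in Proposition \ref{Prop weak convergence} (invoking the arbitrariness of the bounded set $D$), pointwise convergence to $0$ together with uniform boundedness and equicontinuity forces $\Psi_n\to 0$. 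Combining this with the Gronwall bound gives $\phi_n(T)\to 0$, i.e. $\|Z^{h_n}-Z^h\|_{\infty}\to 0$, and the interpolation of the first paragraph then delivers $Z^{h_n}\to Z^h$ in $\EE_\alpha$; as $\{h_n\}$ was an arbitrary weakly convergent sequence, this proves the stated continuity. The remaining ingredients --- the uniform a priori bounds, the Gronwall estimate and the interpolation --- are routine and parallel the analysis of the skeleton equation in \cite{OS}, so I expect no real difficulty there.
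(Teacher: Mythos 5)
Your proposal is correct and follows essentially the same route as the paper: the same decomposition of $Z^{h_n}-Z^h$ into the term linear in $h_n-h$ and the $b'(u^0)$ term, the same weak-convergence plus equicontinuity plus Arzel\`a--Ascoli argument to obtain uniform convergence of the former, and the same Gronwall step. The only (cosmetic) difference is the final packaging: the paper invokes the deterministic version of Lemma \ref{Lem 3} together with the uniform H\"older bound \eqref{eq holder z}, whereas you interpolate between the sup-norm and a uniform $\CC^{\alpha''}$ bound with $\alpha''>\alpha$, which amounts to the same estimate.
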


\bprf Let  $\{h, (h_n)_{n\ge1}\}\subset \HH_T^N$ such that for any $g\in\HH_T$,
$$
\lim_{n\rightarrow\infty}\langle h_n-h, g \rangle_{\HH_T}=0.
$$
We need to prove that
\beq\label{eq convergence}
\lim_{n\rightarrow \infty}\|Z^{h_n}-Z^h\|_{\alpha}=0.
\nneq
Applying  the deterministic version of  Lemma \ref{Lem 3} to   $Z^{h_n}$ and $Z^h$, the proof of \eqref{eq convergence} can be divided into two steps :
\begin{itemize}
  \item[(1)]{  Pointwise convergence} : for any $(t,x)\in [0,T]\times D$,
  \beq\label{eq pointwise}
  \lim_{n\rightarrow\infty}\left|Z^{h_n}(t,x)-Z^h(t,x)  \right| = 0.
  \nneq
  \item[(2)]{  Estimation of the increments} : for any $(t,x), (s,y)\in [0,T]\times D$, $\alpha\in\II$,
  \begin{align}\label{eq increments}
  &\sup_{n\ge1}\left|(Z^{h_n}(t,x)-Z^h(t,x))-(Z^{h_n}(s,y)-Z^h(s,y))   \right|\notag\\
  \le& C\left(|t-s|+|x-y| \right)^{\alpha}.
  \end{align}
\end{itemize}

By using the similar (but more easier) strategy in the proof  of \cite[Theorem 2.3]{OS}, one can prove that the solution $Z^h$ of  \eqref{eq sk} satisfies that   for any $\alpha\in\II$,  there exists $C>0$ such that for any $(t,x), (s,y)\in[0,T]\times D$,
\beq\label{eq holder z}
\sup_{h\in \HH_T^N }|Z^h(t,x)-Z^h(s,y)|\leq C(|t-s|+|x-y|)^{\alpha }.
 \nneq
Thus, \eqref{eq increments} holds. Next, it remains to prove \eqref{eq pointwise}.

  Notice that for any $(t,x)\in [0,T]\times \rr^3$,
  \begin{align}\label{eq Point 1}
&Z^{h_n}(t,x)-Z^h(t,x)\notag\\
=&\int_0^t\left\langle G(t-s, x-\cdot)\sigma(u^0(s,\cdot)),h_n(s,\cdot)-h(s,\cdot)\right\rangle_{\HH}ds\notag\\
&+\int_0^tG(t-s)\ast\left\{b'(u^0(s,\cdot))\left[Z^{h_n}(s,\cdot)-Z^h(s,\cdot)\right]\right\}(x)ds\notag\\
=:&I_1^n(t,x)+I_2^n(t,x).
\end{align}
 Using the similar   arguments as in the
proof  \eqref{A22}, we can obtain that
\beq\label{eq I 1}
\lim_{n\rightarrow \infty}\sup_{(t,x)\in[0,T]\times\rr^3}|I_1^n(t,x)|=0.
\nneq

Set $\zeta^n(t):=\sup_{(s,x)\in[0,t]\times\rr^3}|Z^{h_n}(s,x)-Z^h(s,x)|$. By  \eqref{H3'}, we have
\begin{align}\label{eq I 2}
|I_2(t,x)|
\le &\int_0^t\int_{\rr^3}G(t-s, x-y)\left| b'(u^0(s,y))\left[Z^{h_n}(s,y)-Z^h(s,y)\right]\right|ds dy\notag\\
\le & K \int_0^t\int_{\rr^3}G(t-s, x-y)\sup_{(u,z)\in[0,s]\times\rr^3}\left| Z^{h_n}(u,z)-Z^h(u,z)\right|ds dy\notag\\
\le& C(K,T)\int_0^t \zeta^n(s)ds.
\end{align}
By \eqref{eq Point 1} and \eqref{eq I 2}, we have
$$
\zeta^n(t)\le C(K,T)\int_0^t \zeta^n(s)ds+\sup_{(t,x)\in[0,T]\times\rr^3}|I_1^n(t,x)|.
$$
Hence, by   Gronwall's lemma and \eqref{eq I 1}, we obtain that
$$ \zeta^n(T)\le e^{C(K, T)T} \sup_{(t,x)\in[0,T]\times\rr^3}|I_1^n(t,x)|\longrightarrow 0,\quad \text{as } n\rightarrow\infty.
$$
The proof is complete.
\nprf

\vskip0.3cm

\noindent{\bf Acknowledgements:}  The authors are grateful to the referee for his/her very numerous and conscientious
comments and corrections.   R. Wang was supported by Natural Science Foundation of China (11301498, 11431014, 11671076). N. Yao was supported by Natural Science Foundation of China (11371283).

\vskip0.3cm

\end{document}